\tikzset{mydescription/.style={anchor=center,fill=white}}
\newtheorem{theorem}{Theorem}
\newtheorem{lemma}[theorem]{Lemma}
\newtheorem{proposition}[theorem]{Proposition}
\newtheorem{corol}[theorem]{Corollary}
\newtheorem*{conjecture}
{Conjecture}
\newtheorem{remark}[theorem]{Remark}
\newtheorem{definition}[theorem]{Definition}
\newcommand{\N}{{\mathbb N}}
\newcommand{\Cz}{{\mathcal{C}\mathfrak{z}}}
\newcommand{\SN}{{\mathcal S}({\mathbb N})}
\newcommand{\IN}{{\mathcal I (\mathbb N)}}
\begin{document}


\title{From a conjecture of Collatz to Thompson's \\ group $\mathcal F$,  via a conjunction of Girard}

\author{Peter M. Hines}




\maketitle

\begin{abstract}	
	The famous $3x+1$ problem of L. Collatz needs no introduction; however, this paper concerns a lesser-known, but similarly unresolved, precursor problem : the Original Collatz Conjecture, or OCC \cite{Lag85}. 
	
	We demonstrate that the core arithmetic operator from the OCC, when combined with a conjunction of J.-Y. Girard from his Geometry of Interaction system \cite{GOI1,GOI2}, leads to a realisation of R. Thompson's group $\mathcal F$ as congruential functions, in the sense of J. Conway \cite{JC72}.
	
	We also give the underlying category theory that accounts for this, and describe the core operator from the OCC as a canonical coherence isomorphism. 
\end{abstract}

\begin{center}
{\bf Keywords : }{\em Collatz Conjectures, Thompson's Group $\mathcal F$, Congruential Functions, Geometry of Interaction, Categorical Coherence, Associahedra}
\end{center}



\section{Basic Definitions : the OCC, and Girard's conjunction}
We start with the Original Collatz Conjecture, as described in \cite{Lag85}.
\begin{definition} 
	We denote the symmetric group on $\mathbb N$ by $\SN$, and define the {\bf original Collatz bijection} $\rho\in \SN$ 
	by $\rho (n) = \ \left\{ \begin{array}{lcr}
	\vspace{0.3em}
	\frac{2n}{3} & & n\ (mod\ 3) = 0, \\ 
	\vspace{0.3em}
	\frac{4n-1}{3} & & n \ (mod \ 3) = 1, \\ 
	\vspace{0.3em}
	\frac{4n+1}{3} & & n \ (mod \ 3) = 2. 
	\end{array}\right.$
\end{definition}
\begin{lemma}
	The unique fixed points of $\rho$ are $\{ 0,1\}\subseteq \mathbb N$, and the orbits of all $\{ 0,\ldots , 9 \} \setminus \{ 8\}$ are finite.
\end{lemma}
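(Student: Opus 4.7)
The plan is to split the statement into two independent claims and handle each by direct case analysis, since the definition of $\rho$ is already a case split on the residue of $n$ modulo $3$.

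For the fixed points, I would substitute $\rho(n) = n$ into each of the three defining branches. In the $n \equiv 0 \pmod 3$ case, $\frac{2n}{3} = n$ forces $n=0$; in the $n \equiv 1 \pmod 3$ case, $\frac{4n-1}{3} = n$ forces $n=1$; and in the $n \equiv 2 \pmod 3$ case, $\frac{4n+1}{3} = n$ gives $n=-1 \notin \mathbb N$. Checking that $0$ and $1$ really sit in the residue classes assumed then completes this half.

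For the orbits, I would simply compute. The point $0$ and $1$ are fixed by the previous paragraph, so their orbits are singletons. Starting at $2$, I would iterate $\rho$ and record the trajectory, which quickly closes into a $2$-cycle $\{2,3\}$. Starting at $4$, a similar iteration should close into a single cycle passing through $4,5,7,9,6$ (in some order determined by the definition), giving one finite orbit containing every element of $\{4,5,6,7,9\}$. Thus the six points $\{2,3,4,5,6,7,9\}$ lie in only two cycles, and together with the two fixed points this exhausts $\{0,\ldots,9\}\setminus\{8\}$ and establishes finiteness.

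There is no genuine obstacle here; the only place to be careful is bookkeeping in the orbit computation, in particular checking at each step which of the three branches of $\rho$ applies so that the cycle really does close. (The conspicuous absence of $8$ from the list is of course the point: its orbit is exactly the open case of the Original Collatz Conjecture, so one should not even attempt to include it.)
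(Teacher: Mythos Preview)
Your approach is correct and matches the paper's proof exactly: elementary algebra on the three branches to pin down the fixed points, followed by direct computation of the cycles $2\to 3\to 2$ and $4\to 5\to 7\to 9\to 6\to 4$. (Minor slip: you call $\{2,3,4,5,6,7,9\}$ ``six points'' when it is seven, but this does not affect the argument.)
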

\begin{proof} Elementary algebra gives $\{0,1\}$ as unique solutions to $\rho(n)=n$. By direct calculation, $2\rightarrow 3 \rightarrow 2$ and $4\rightarrow 5\rightarrow 7\rightarrow 9\rightarrow 6\rightarrow 4$ are finite orbits.
\end{proof} 
As described in \cite{Lag85}, the above bijection appeared in unpublished notebooks of Collatz, dated $1^{st}$ May, 1932, as the core of the following (currently unresolved) conjecture : 
\begin{conjecture}[The O.C.C.]  The orbit of $8$ under $\rho$ is infinite.
\end{conjecture}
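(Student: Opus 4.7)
The plan is to recast the orbit statement as a cycle-exclusion problem and attack it by a combination of finite enumeration for short cycles and linear-forms-in-logarithms for long ones, along the lines of Steiner's theorem for $1$-cycles of the $3x+1$ problem.

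Since $\rho \in \SN$ is a bijection, the forward orbit of any $n$ is finite if and only if $n$ belongs to a cycle of $\rho$, so the OCC is equivalent to the assertion that $8$ lies on no cycle. To parametrise candidate cycles I would tag each putative periodic orbit of length $k$ by its residue word $w = r_0 \cdots r_{k-1} \in \{0,1,2\}^k$ with $r_i = \rho^i(n) \bmod 3$. On the cylinder prescribed by $w$, $\rho^k$ acts as an affine map $A_w(x) = \alpha_w x + \beta_w$ with $\alpha_w = 2^{a+2b}/3^k$, where $a,b$ count the $0$'s and non-$0$'s of $w$; since $2^{a+2b}=3^k$ is impossible for $k>0$, the equation $A_w(n)=n$ determines a unique rational candidate $n_w = \beta_w/(1-\alpha_w)$. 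Short cycles are then dispatched by direct enumeration: compute $n_w$ for all words of length $k \le k_0$ up to a tractable bound and verify that neither $n_w$ nor any iterate $\rho^i(n_w)$ equals $8$; the Lemma already accounts for the small cycles $\{0\}, \{1\}, \{2,3\}, \{4,5,7,9,6\}$, none of which contains $8$.

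The main obstacle lies in long cycles. From $n_w \in \mathbb N$ one extracts a linear form in $\log 2$ and $\log 3$ whose exponential is $\alpha_w$, and Baker's theorem would give a lower bound on $|1-\alpha_w|$ polynomial in $1/k$; this must be matched against an upper bound on $|\beta_w|$ coming from a telescoping sum of the $\pm 1/3$ affine constants weighted by partial multiplier products. The genuine difficulty peculiar to the OCC, absent in the $3x+1$ setting, is that residues $1$ and $2$ share the multiplier $4/3$ while carrying affine constants of opposite sign $\mp 1/3$, granting $\beta_w$ real cancellation opportunities keyed to the fine pattern of non-zero residues in $w$; controlling these cancellations uniformly seems to require either a structural combinatorial identity that I do not yet see, or an advance on Baker-type bounds beyond the current state of the art, which is consonant with the conjecture remaining open.
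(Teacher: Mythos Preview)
The paper does not prove this statement. It is presented there explicitly as an open conjecture (``currently unresolved''), and the entire paper treats $\rho$ as a structural object rather than attempting to settle the dynamics of the orbit of $8$. So there is no proof in the paper to compare your proposal against.

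Your proposal is not a proof either, and to your credit you say so. The reduction you outline is sound as far as it goes: bijectivity of $\rho$ does make the OCC equivalent to ``$8$ lies on no cycle'', the residue-word parametrisation does make $\rho^k$ affine on each cylinder with multiplier $2^{a+2b}/3^k$, and the short-cycle enumeration is a finite computation. The genuine gap is exactly where you locate it: for long cycles you need an effective upper bound on $|\beta_w|$ that beats the Baker-type lower bound on $|1-\alpha_w|$, and the sign cancellation between the affine constants for residues $1$ and $2$ (both with multiplier $4/3$, constants $\mp 1/3$) means $\beta_w$ is not controlled by the crude triangle-inequality estimate that drives Steiner's $1$-cycle argument. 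Absent a combinatorial structure theorem constraining admissible residue words, or a transcendence bound sharper than anything currently known, the argument does not close. This is a reasonable research outline, but it should be labelled as such rather than as a proof.
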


We combine the core operator of the OCC with the model of a logical conjunction used by J.-Y. Girard \cite{GOI1,GOI2}. As pointed out in \cite{PHD,HA,AHS}, Girard's system was based around the inverse monoid $\mathcal I(\mathbb N)$ of partial injections on $\mathbb N$; we restrict ourselves to the subgroup $\SN\subseteq \mathcal I(\mathbb N)$.

\begin{definition} {\bf Girard's conjunction} is the function $\_\star \_ : \SN\times \SN\rightarrow \SN$ given by 
	$(a\star b)(n) = \left\{ \begin{array}{lcr}
	\vspace{0.3em}
	2.a\left( \frac{n}{2}\right) & \ \ & n \ \mbox{even,} \\
	2.b\left( \frac{n-1}{2}\right)+1 & \ \ & n \ \mbox{odd.} \\
	\end{array} \right.$\\
	It is well-established (e.g. \cite{PHD,HA,AHS}) that this is an injective homomorphism. 
\end{definition}

\subsection{A slight simplification}
The OCC considers orbits of the natural numbers under the bijection $\rho$. We conjugate $\rho$ by the successor function / its (partially defined) inverse to eliminate the triviality $\rho(0)=0$, but preserve the behaviour  of all other orbits : 
\begin{definition}\label{rcb-def} We define the {\bf reduced Collatz bijection} $\lambda\in \SN$ by $\lambda(n)=\rho(n+1)-1$. This is a well-defined  bijection on $\mathbb N$, since $\rho(0)=0$. Expanding out the definition gives an explicit formula :  
	\[ \lambda (n) = \ \left\{ \begin{array}{lcr}
	\vspace{0.3em}
	\frac{4n}{3} & & n\ (mod\ 3) = 0, \\ \vspace{0.3em}
	\frac{4n+2}{3} & & n \ (mod \ 3) = 1, \\ \vspace{0.3em}
	\frac{2n-1}{3} & & n \ (mod \ 3) = 2. 
	\end{array}\right.
	\]
	By construction, $ \lambda^K(n)=\rho^K(n+1)-1$, so questions about the non-trivial orbits of $\rho$ and those of $\lambda$ are interchangeable; the OCC then becomes the conjecture that the orbit of $7$ under $\lambda$ is infinite. 
\end{definition} 

\begin{lemma}\label{noCoincidence-lem} $\lambda\neq\rho$, and $\lambda(n)=\rho(n) \ \Leftrightarrow \ n=0$, for all $n\in \mathbb N$.
\end{lemma}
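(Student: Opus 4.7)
The plan is to proceed by a direct case analysis on the residue class of $n$ modulo $3$, since both $\rho$ and $\lambda$ are defined by a three-branch piecewise formula on exactly these residues. For each of the three cases, I would set the corresponding branches equal and solve the resulting linear equation over $\mathbb{N}$ (or $\mathbb{Z}$), checking at the end that any solution obtained really does lie in the residue class under consideration.

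Concretely, the three cases go as follows. If $n \equiv 0 \pmod 3$, equating $\tfrac{2n}{3} = \tfrac{4n}{3}$ gives $2n = 4n$, hence $n=0$; and $n=0$ indeed lies in this residue class, so this yields the unique solution. If $n \equiv 1 \pmod 3$, the equation $\tfrac{4n-1}{3}=\tfrac{4n+2}{3}$ collapses to $-1=2$, which is absurd, so this case contributes no solutions. If $n \equiv 2 \pmod 3$, the equation $\tfrac{4n+1}{3}=\tfrac{2n-1}{3}$ rearranges to $2n = -2$, which has no solution in $\mathbb{N}$. Together these exhaust all residues, and so $\lambda(n)=\rho(n)$ forces $n=0$; conversely $\rho(0)=0=\lambda(0)$ by direct substitution, giving the $\Leftrightarrow$.

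For the separate assertion $\lambda \neq \rho$, I would simply remark that it follows from the above, since the set of coincidence points is the singleton $\{0\}$, not all of $\mathbb{N}$; equivalently, one exhibits any single witness, e.g.\ $\lambda(3)=4$ while $\rho(3)=2$. There is no genuine obstacle here: the statement is a routine arithmetic identity, and the only thing to be careful about is (i) ensuring that when we clear denominators and solve, the candidate solution actually satisfies the residue hypothesis of its branch, and (ii) not forgetting to rule out negative ``solutions'' that lie outside $\mathbb N$. Both are handled automatically by the case split above.
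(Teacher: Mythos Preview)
Your proof is correct and follows essentially the same approach as the paper: a three-way case split on $n \bmod 3$, equating the corresponding branches of $\lambda$ and $\rho$ to obtain, respectively, $n=0$, a contradiction, and $n=-1\notin\mathbb N$. Your treatment is slightly more explicit about checking that candidate solutions lie in the correct residue class and about the converse direction, but the substance is identical.
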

\begin{proof} Trivially, $\lambda\neq\rho$. Writing the three cases separately gives  
	\[ \lambda(n)\ = \ \left\{ 
	\begin{array}{lcr} 
	4n/3 & \ \ \ \ n=3M \ \ \ \ & 2n/3  \\
	(4n+2)/3 & \ \ \ \ n=3M+1 \ \ \ \ & (4n-1)/3\\
	(2n-1)/3 & \ \ \ \ n=3M+2 \ \ \ \  & (4n+1)/3 \\
	\end{array}\right\} \ = \ \rho(n) \]
	giving, respectively, $n=0$, a contradiction, and  $n=-1\notin \N$.
\end{proof}
\subsection{Combining the original and reduced Collatz bijections}
The inverses of $\lambda,\rho\in \SN$ may be given explicitly as : 
\[ 
\lambda ^{-1}(n) \ = \  
\left\{ 
\begin{array}{lr}  
\vspace{0.3em}
\frac{3n}{4} & n \ (mod \ 4) = 0, \\ \vspace{0.3em}
\frac{3n-2}{4} & n \ (mod \ 4)=2, \\ 
\frac{3n+1}{2} &  n \ \mbox{ odd,}
\end{array} 
\right. 
\  \mbox{ and }  \ 
\rho^{-1} (n) = \ \left\{ \begin{array}{lcr}
\vspace{0.3em}
\frac{3n}{2} & & n\ (mod\ 2) =0, \\ \vspace{0.3em}
\frac{3n+1}{4} & & n \ (mod \ 4) = 1, \\ 
\frac{3n-1}{4} & & n \ (mod \ 4) = 3. 
\end{array}\right.
\]
Composing the reduced Collatz bijection with the inverse of the original Collatz bijection results in a bijection previously seen in category theory \& logic : 
\begin{definition}\label{assoc-def}
	We define the {\bf associator} $\alpha\in \SN$  by $\alpha=\lambda\rho^{-1}\in \SN$; equivalently, 
	$\alpha(n) = \rho(\rho^{-1}(n) +1)-1$. Expanding out the definition gives : 
	\[ \alpha (n) \ = \ 
	\left\{ \begin{array}{lr} \vspace{0.3em} 
	2n 				&  n\ (mod \ 2)=0, \\ \vspace{0.3em}
	n+1 			& n \ (mod \ 4) =1, \\  
	\frac{n-1}{2} 	& n \ (mod \ 4) =3, \end{array} \right. 
	\ \ \mbox{ and } \ \  
	\alpha^{-1}(n) \ = \ \left\{ \begin{array}{lr}
	\vspace{0.3em}
	\frac{n}{2}   & n \ (mod \ 4) = 0, \\  \vspace{0.3em}
	n-1  & n \ (mod \ 4) = 2, \\  
	2n+1   & n \ (mod \ 2) = 1. 
	\end{array}\right.
	\] 
\end{definition}
We may think of the associator as, {\em ``The shifted Collatz bijection, composed with the inverse of the original bijection''}.  
By contrast with either Collatz bijection, orbits of natural numbers under the associator are very easy to characterise.

\begin{lemma}\label{assocOrbits-lem}
	The associator has a unique fixed point, $\alpha(0)=0$. The orbit of every $n>0\in\mathbb N$ has a single local minimum iff $n>0$ is even; no orbit has a local maximum, and thus the orbit of every $n>0$ is infinite.
\end{lemma}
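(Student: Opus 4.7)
The plan is to handle every clause by direct case analysis on the residue of $n$ modulo $4$, since the piecewise formulas for $\alpha$ and $\alpha^{-1}$ in Definition \ref{assoc-def} are tailored to exactly that partition; the whole proof reduces to elementary arithmetic with no deeper machinery needed. For the unique fixed point, I would solve $\alpha(n)=n$ branch by branch: the even case gives $2n=n$, hence $n=0$; the $n\equiv 1 \pmod 4$ case gives $n+1=n$, impossible; and the $n\equiv 3 \pmod 4$ case gives $(n-1)/2=n$, hence $n=-1\notin\mathbb N$. So $\{0\}$ is the unique fixed point.

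For ``no orbit has a local maximum'', I would verify the dichotomy that for every $m>0$ at least one of $\alpha(m),\alpha^{-1}(m)$ strictly exceeds $m$: if $m$ is even then $\alpha(m)=2m>m$, while if $m$ is odd then (reading off the third branch of $\alpha^{-1}$) $\alpha^{-1}(m)=2m+1>m$. Since $m$ always has a strictly larger neighbour in its orbit, no $m>0$ is simultaneously greater than both its image and preimage, ruling out local maxima altogether.

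For the ``single local minimum'' clause, I would first characterise the local minima as precisely those $m>0$ with $\alpha(m)>m$ and $\alpha^{-1}(m)>m$; the case split immediately identifies these as the positive integers $\equiv 1 \pmod 4$. Uniqueness of the local minimum within any one orbit is then a formal consequence of the previous paragraph, as two distinct local minima in the same orbit would force a local maximum between them. Existence within each orbit of $n>0$ is shown by iteration: starting from $n\equiv 3 \pmod 4$ the map $n\mapsto (n-1)/2$ strictly halves and the orbit descends until it lands on a value $\equiv 1 \pmod 4$ (termination is guaranteed by well-foundedness of $\mathbb N$), while for $n$ even or $n\equiv 1 \pmod 4$ iterating $\alpha^{-1}$ backward reaches such a value in finitely many steps by the formula for $\alpha^{-1}$ on even residues. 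The parity clause then amounts to tracking, in each residue class, whether this unique local minimum coincides with $n$ or sits strictly below it.

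Finally, infiniteness of the orbit of each $n>0$ comes for free: once the forward orbit passes through the unique local minimum $m\equiv 1\pmod 4$, one application of $\alpha$ produces the even value $m+1$, and thereafter every successive application of $\alpha$ strictly doubles the current term, so the orbit is unbounded above and hence infinite. The only mild obstacle I foresee is bookkeeping: making the residue-by-residue tabulation of $\alpha$ and $\alpha^{-1}$ as a single compact comparison with $n$, and ensuring the forward-iteration argument for reaching a $\equiv 1 \pmod 4$ local minimum really does terminate (which, as above, follows from the strict halving on the $\equiv 3 \pmod 4$ branch combined with the well-ordering of $\mathbb N$).
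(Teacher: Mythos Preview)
Your argument is correct on the substantive points (unique fixed point, absence of local maxima, infiniteness of every nonzero orbit), and the mod-$4$ case analysis you carry out is sound. It differs in organisation from the paper's proof, which passes immediately to $\alpha^{-1}$ and exploits a cleaner two-case parity dichotomy: for odd $n$ one has $\alpha^{-1}(n)=2n+1$, which is again odd and strictly larger, while for even $n>0$ one has $\alpha^{-1}(n)<n$; hence a forward $\alpha^{-1}$-trajectory from an even start descends, reaches an odd number exactly once, and thereafter climbs without bound. This odd/even split is shorter than your four-residue split and also makes the ``iff $n>0$ is even'' clause transparent: the paper is speaking of forward trajectories under $\alpha^{-1}$, not two-sided orbits, and the clause simply records that such a trajectory has an interior minimum precisely when it begins at an even number. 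Your reading of that clause (``whether the unique local minimum coincides with $n$ or sits strictly below it'') does not match this --- under the two-sided-orbit reading you adopt, every orbit of $n>0$ has exactly one local minimum regardless of the parity of $n$, so the biconditional would be vacuous --- and this is the one place where your proposal and the paper genuinely diverge, though it stems from ambiguity in the statement rather than a mathematical error. Your constructive route to infiniteness (double forever once past the minimum $m\equiv 1\pmod 4$) is a pleasant alternative to the paper's ``finite orbit $\Rightarrow$ local maximum'' contrapositive.
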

\begin{proof} {\em It is easier (but equivalent) to prove this for the inverse, $\alpha^{-1}=\rho\lambda^{-1}$.}
	
	Trivially, $\alpha^{-1}(0)=0$; this is unique, since $\alpha^{-1}(n)=n \ \Leftrightarrow \ r(n)=l(n)$, contradicting Lemma \ref{noCoincidence-lem} for all $n\neq 0$. 
	When $n\in \mathbb N$ is odd, $\alpha^{-1}(n)= 2n+1>n$; which  is similarly odd. Thus, $n< \alpha^{-1}(n)$ for any odd $n$. When $n>0$ is even, $\alpha^{-1}(n)<n$ may be even or odd, but cannot be zero since $\alpha^{-1}$ is a bijection. Thus, any trajectory starting with a non-zero even number decreases until it arrives at an odd number (the local minimum), at which point it increases without limit.
	
	Every finite orbit attains a maximum and a minimum; thus the only finite orbit under $\alpha^{-1}$ (and hence under $\alpha$) is the fixed point $\alpha^{-1}(0)=0=\alpha(0)$.
\end{proof}
Girard's conjunction and the associator (\& hence the original Collatz bijecton) are linked by a property more familiar from category theory : 
\begin{lemma}[Naturality]\label{nat-lemma} $\alpha (f\star (g\star h)) \ = \ ((f\star g)\star h) \alpha$, for all $f,g,h\in\SN$.
	
\end{lemma}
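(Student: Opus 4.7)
The plan is to verify the identity by direct case analysis on the residue of $n \in \mathbb N$ modulo $4$. First I would unfold the definition of $\star$ twice on each side to obtain explicit piecewise formulas:
\[ (f\star(g\star h))(n) \ = \ \left\{\begin{array}{lr} 2f(n/2) & n\ \text{even,} \\ 4g((n-1)/4)+1 & n\equiv 1 \ (\mathrm{mod}\ 4),\\ 4h((n-3)/4)+3 & n\equiv 3 \ (\mathrm{mod}\ 4), \end{array}\right. \]
\[ ((f\star g)\star h)(n) \ = \ \left\{\begin{array}{lr} 4f(n/4) & n\equiv 0 \ (\mathrm{mod}\ 4), \\ 4g((n-2)/4)+2 & n\equiv 2 \ (\mathrm{mod}\ 4),\\ 2h((n-1)/2)+1 & n\ \text{odd}. \end{array}\right. \]
Thus the two nested conjunctions act on the same set $\mathbb N$, but are governed by two different trichotomies, namely $\mathbb N = 2\mathbb N \sqcup (4\mathbb N+1) \sqcup (4\mathbb N+3)$ on the right, and $\mathbb N = 4\mathbb N \sqcup (4\mathbb N+2) \sqcup (2\mathbb N+1)$ on the left.

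The key observation is then that $\alpha$ from Definition \ref{assoc-def} is \emph{precisely} the bijection identifying these two trichotomies: it sends the evens to $4\mathbb N$ via $2k\mapsto 4k$, sends $4\mathbb N+1$ to $4\mathbb N+2$ via $4m+1\mapsto 4m+2$, and sends $4\mathbb N+3$ to the odds via $4m+3\mapsto 2m+1$. I would fix $n$ in each of the three classes of the first trichotomy in turn. In each case the image $(f\star(g\star h))(n)$ already lies in a determined class (since $2f(k)$ is even, $4g(m)+1\equiv 1$ and $4h(m)+3\equiv 3$ modulo $4$), so the outer $\alpha$ on the left-hand side unambiguously applies a single one of its three rules; direct substitution gives the three values $4f(k)$, $4g(m)+2$, $2h(m)+1$. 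On the right-hand side $\alpha(n)$ lies in the matching class of the second trichotomy, and evaluating $((f\star g)\star h)$ there yields the same three values with matching parameters.

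The main obstacle is purely one of bookkeeping: one must track six inner/outer substitutions across the three cases without index errors. There is no conceptual difficulty, because the statement merely records the naturality of the canonical associator for the monoidal product $\star$. As the paper already alludes, $(\SN,\star)$ sits inside the monoidal groupoid $\IN$ used in Girard's Geometry of Interaction, where this naturality is a consequence of monoidal coherence; the calculation above will confirm that the formula in Definition \ref{assoc-def} is indeed the induced coherence isomorphism in this concrete congruential presentation.
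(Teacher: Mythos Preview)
Your proposal is correct and follows essentially the same approach as the paper: a direct case analysis on $n$ modulo $4$, unfolding both sides to obtain the common piecewise value $4f(n/2)$, $4g((n-1)/4)+2$, $2h((n-3)/4)+1$ in the three cases. The paper simply states the outcome of this calculation in one line, whereas you spell out the intermediate expansions of $(f\star(g\star h))$ and $((f\star g)\star h)$ and describe $\alpha$ as the bijection matching the two residue trichotomies; this is a helpful elaboration but not a different argument.
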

\begin{proof}
	Direct calculation gives, for all $n\in \N$,
	\[ \alpha (f\star (g\star h)) (n) \ = \ ((f\star g)\star h) \alpha (n) \ = \ \left\{ \begin{array}{lcr}
	4f\left(\frac{n}{2}\right) & \ \ & n\ (mod \ 2) =0 \\
	4g\left(\frac{n-1}{4}\right)+2 & \ \ & n\ (mod \ 4) =1 \\
	2h\left(\frac{n-3}{4}\right)-1 & \ \ & n\ (mod \ 4) =3 \\
	\end{array}\right. 
	\]
\end{proof}

\section{Thompson's $\mathcal F$ from Collatz and Girard}
A significant object of study in combinatorial group theory is Richard Thompson's group $\mathcal F$; we refer to \cite{CFP,MB96} for an overview. Many concrete realisations are known; we start with an abstract definition as generators and relations.
\begin{definition}\label{F-def}
	Thompson's group $\mathcal F$ is generated by the set $\{ x_j \}_{j\in \mathbb N}$ subject to the relations 
	$x_i^{-1} x_j x_i = x_{j+1}$, for all $i<j$. It is well-known (e.g. \cite{CFP}) that this is not a minimal generating set; the subset $\{x_0,x_1\}$ generates the entirety of $\mathcal F$, but the required relations are less intuitive.
\end{definition}

\begin{theorem} The group generated by $\{ \alpha \ ,\ Id\star \alpha \}$ is isomorphic to $\mathcal F$.
\end{theorem}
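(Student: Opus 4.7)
The plan is to verify that the defining relations of Thompson's $\mathcal{F}$ hold among specific elements of $\langle \alpha, Id\star\alpha\rangle$, yielding a surjective homomorphism $\phi:\mathcal{F}\twoheadrightarrow \langle \alpha, Id\star\alpha\rangle$, and then to establish injectivity of $\phi$ by identifying the resulting action on $\mathbb{N}$ with a known faithful realisation of $\mathcal{F}$.

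To that end, define $x_0 := \alpha$ and $x_{n+1} := Id\star x_n$ recursively, so that $x_1 = Id\star\alpha$ matches the second given generator. I will verify by induction on $i$ that these $x_n$ satisfy Thompson's relations $x_i^{-1}x_jx_i = x_{j+1}$ for all $i<j$. The base case $i=0$ is an immediate specialisation of the naturality lemma (Lemma~\ref{nat-lemma}) with $f=g=Id$ and $h = x_{j-1}$, which yields $\alpha(Id\star(Id\star x_{j-1})) = (Id\star x_{j-1})\alpha$, i.e., $\alpha\, x_{j+1} = x_j\, \alpha$. The inductive step is formal, using that $Id\star(-):\SN\to\SN$ is a group homomorphism (itself immediate from $\star$ being a homomorphism on $\SN\times\SN$):
\[
x_i^{-1}x_jx_i \ =\ (Id\star x_{i-1})^{-1}(Id\star x_{j-1})(Id\star x_{i-1})\ =\ Id\star (x_{i-1}^{-1}x_{j-1}x_{i-1}),
\]
and the induction hypothesis for $(i-1, j-1)$ collapses the right-hand side to $Id\star x_j = x_{j+1}$. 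The resulting $\phi$ is surjective onto $\langle \alpha, Id\star\alpha\rangle$, since $\{x_0,x_1\}$ is already known to generate $\mathcal{F}$.

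The remaining (and main) task is injectivity. The plan is to match the concrete action of $\langle \alpha, Id\star\alpha\rangle$ on $\mathbb{N}$ with a known faithful action of $\mathcal{F}$ on the leaves of an infinite rooted binary tree. Girard's $\star$ identifies $\mathbb{N}$ with $\mathbb{N}\sqcup\mathbb{N}$ via the even/odd split; iterating writes every $n\in\mathbb{N}$ as a finite binary string (its base-$2$ expansion, LSB-first), which one reads as a path in the tree. A direct, case-by-case reading of the three clauses of Definition~\ref{assoc-def} then exhibits $\alpha$ as the canonical ``root associator'' tree rewrite, while $Id\star\alpha$ performs the same rewrite one level deeper on the right-hand subtree. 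These two tree rotations are a standard faithful realisation of $\mathcal{F}$, whence $\ker(\phi)$ is trivial.

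The main obstacle is this injectivity step. The relations themselves drop out almost mechanically from the naturality lemma plus the homomorphism property of $\star$, so no real work is required there. What demands care is the bookkeeping that pins our binary rewrite rules to the standard tree-rotation generators of $\mathcal{F}$ acting, say, on the dyadic subintervals of $[0,1]$ or equivalently on finite rooted binary trees; once that correspondence is made precise, faithfulness of $\phi$ is inherited from the classical realisation.
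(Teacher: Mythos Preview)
Your verification of Thompson's relations is essentially the same as the paper's, only with the induction on $i$ made explicit where the paper compresses it into a single line. The genuine divergence is in the injectivity step. The paper does not identify the action on $\mathbb N$ with a known faithful model at all; instead it invokes the structural fact (cited from \cite{CFP}) that every homomorphic image of $\mathcal F$ is either abelian or all of $\mathcal F$, so one only needs to observe that $\alpha$ and $Id\star\alpha$ do not commute. Your route---reading the three clauses of $\alpha$ as the prefix rewrites $0w\mapsto 00w$, $10w\mapsto 01w$, $11w\mapsto 1w$ on LSB-first binary expansions, hence as the root tree-rotation, with $Id\star\alpha$ the same rotation one level down---is correct and buys a concrete, constructive identification with the classical tree-pair (equivalently dyadic-interval) model, at the cost of the bookkeeping you flag; the paper's route buys a two-line finish at the cost of importing a nontrivial fact about the normal-subgroup lattice of $\mathcal F$. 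One small point worth making explicit in your write-up: $\mathbb N$ sits densely inside the $2$-adics (equivalently the Cantor space of infinite binary words) and the congruential formulae are continuous there, so triviality on $\mathbb N$ forces triviality on the full space where faithfulness is standard.
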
 
\begin{proof}
	We define $\{ X_k \}_{k\in \mathbb N}\subseteq \SN$ by $X_0=\alpha$ and $X_{j+1}=Id \star X_j$, for all $j>0$. 
	As $(\_ \star \_)$ is a group homomorphism, Lemma \ref{nat-lemma} (naturality) implies 
	\[ \alpha (Id\star (Id\star f)) \alpha^{-1} \ =\ (Id\star Id) \star f \ = \ Id \star f \ \ \ \ \forall f\in \SN\]  
	Applying this to the inductive definition of $X_j$ gives $X_j = X_i X_{j+1}X_i^{-1}$, for all $i<j$, and so the subgroup of $\SN$ generated by $\{ X_j\}_{j\in \mathbb N}$ is a homomorphic image of $\mathcal F$, given by $x_j\mapsto X_j$, for all $j\in \mathbb N$. We then appeal to the well-known  property (.e.g. \cite{CFP}) that every such homomorphic image is either abelian or isomorphic to $\mathcal F$ itself, to deduce that this subgroup is isomorphic to $\mathcal F$.
	
	Finally, the presentation of Definition \ref{F-def} is not minimal and $\{ x_0,x_1 \}$ suffices to generate the whole of $\mathcal F$;  our result follows.
\end{proof}

\begin{definition} We refer to the subgroup of $\SN$ generated by 
	\[ \alpha (n) \ = \ \left\{ \begin{array}{lr} \vspace{0.3em} {2n} & { n} \ (mod \ 2) = 0 \\ \vspace{0.3em}
	n+1 & n \ (mod \ 4) =1 \\  
	\frac{n-1}{2} & n \ (mod \ 4) =3 \end{array} \right.  \ \  \ \ (Id\star \alpha)
	(n) \ = \  \left\{ \begin{array}{lr}  \vspace{0.3em}
	n & n \ (mod \ 2) = 0 \\ \vspace{0.3em}
	2n-1 & n \ (mod \ 4)=1 \\ \vspace{0.3em}
	n+2 & n \ (mod \ 8) =3 \\
	\frac{n-1}{2} & n \ (mod \ 8) =7 \end{array} \right. \]
	as the {\bf Congruential realisation of $\mathcal F$}.

	We may  think of $(Id\star \alpha)$ as, `replicating $\alpha=\rho \left( \rho^{-1}(n)+1\right)-1$ on the odd numbers only'. By construction, we may write the generators of this realisation of $\mathcal F$ in terms of the original Collatz bijection, as  :
	\[ n\ \mapsto \   \rho \left( \rho^{-1}(n)+1\right)-1 \ \ \mbox{and} \ \ n \mapsto \   \left\{ \begin{array}{lcr} \vspace{0.3em} n & & n \ \mbox{even,} \\ 2 . \rho \left( \rho^{-1} \left( \frac{n-1}{2} \right) +1 \right) -1 & \ \ & n \ \mbox{ odd.} \end{array} \right. \]
\end{definition}
We observe that the above realisation of $\mathcal F$ may also be derived from that given by M.V. Lawson in \cite{MVL06} in terms of the polycyclic monoids of inverse semigroup theory \cite{MVL}, by relying on a common \& well-known arithmetic representation of these inverse monoids \cite{NP}.

\begin{remark}[Girard's conjunction on $\mathcal F$]
	In \cite{KB}, K. Brown describes an injective group homomorphism $\mu : \mathcal F \times \mathcal F \rightarrow \mathcal F$ that is ``associative up to conjugation by {\em [the generator]} $x_0$'', so 
	\[ \mu(\mu(a,b),c) \ = \ x_0 \mu(a , \mu(b,c)) x_0^{-1} \ \ \ \forall \ a,b,c\in \mathcal F \]
	This is of course the naturality property of Lemma \ref{nat-lemma}, and the generator $x_0$ in the congruential realisation is precisely the associator, derived from the original Collatz bijection. We may therefore identify Girard's conjunction (when restricted to this realisation of $\mathcal F$) with Brown's homomorphism, and conclude that the congruential realisation of $\mathcal F$ is closed under Girard's conjunction.
\end{remark}

\subsection{$\mathcal F$ as congruential functions}
The above terminology is based on the following definition of J. Conway : 
\begin{definition}\label{cf-def}
	A function $f:\mathbb N \rightarrow \mathbb N$ is {\bf congruential} when there exists some indexed set $\{ (x_j,y_j) :x_j,y_j<K \}_{j=0,\ldots,K-1}$ such that $f(Km+i) \ = \ x_jm+y_j$. 
\end{definition}
Conway's key result \cite{JC72} (see also \cite{SM}) -- that universal computation may be expressed via simple iterative problems on congruential functions -- was later refined \& simplified by S. Burckel \cite{SB}, who used a strict subset of Conway's functions.

It may readily be verified that the bijections $\rho, \lambda$, and $\alpha$ are congruential, and this property is preserved under composition, inverses, and Girard's conjunction. The above realisation of $\mathcal F$ is indeed then a realisation as congruential functions. 

\begin{remark}It remains to describe the structure of the group of congruential bijections generated by $\{ \lambda,\rho,(Id\star \lambda),(Id\star\rho)\}$. This is presumably non-trivial, but  worthwhile; it arises directly from the O.C.C. and contains Thompson's $\mathcal F$  in a natural way. Some identities this group satisfies may be derived categorically, from Figure \ref{pentagram-fig}. 
\end{remark} 

\subsection{An explanation}
The explanation why such seemingly distinct topics (Collatz's conjecture(s), Girard's linear logic, Thompson's $\mathcal F$) are linked is  categorical. All the key arithmetic operations (Collatz's bijection, the reduced Collatz bijection, \& the associator) are {\em canonical isomorphisms}, in the sense of categorical coherence.  Further, Girard's conjunction (or Brown's homomorphism) is a categorical tensor, and Thompson's $\mathcal F$ has been known since its introduction to have close connections with MacLane's celebrated coherence theorem for associativity\footnote{From \cite{PD96}, {\em ``The only non-trivial relations [of $\mathcal F$] correspond to the well-known MacLane-Stasheff pentagon''}.}.

We now describe and discuss the structures for which the bijections we have studied mediate coherence. 

\section{An interpretation as categorical coherence}\label{cats-sect}
\noindent{\em (We assume familiarity with the basics of category theory, MacLane's coherence theorem for associativity \cite{MCL,GMK}, \& the relationship between coherence and  Stasheff's associahedra \cite{JLL}.)}
\\

It is well-known, at least in some fields of category theory, that Thompson's group $\mathcal F$ describes coherence for associativity in the following setting :  

\begin{definition}
	A {\bf semi-monoidal category} $(\mathcal C,\otimes)$ is required to satisfy all the MacLane / Kelly axioms for a monoidal category except for the existence of a unit object, so there exists a natural isomorphism between the functors 
	\[ (\_ \otimes (\_ \otimes \_)) \ \ , \ \ ((\_ \otimes \_ ) \otimes \_ ) \ : \ \mathcal C \times \mathcal C \times \mathcal C \rightarrow \mathcal C \]
	whose components $\{ \tau_{A,B,C} : A\otimes (B \otimes C) \rightarrow (A\otimes B) \otimes C \}_{A,B,C\in Ob(\mathcal C)}$ (the {\bf object-indexed associators}) satisfy MacLane's {\bf pentagon condition}
	\[  ( \  \tau_{A,B,C}\otimes Id_D)  \  \tau_{A,B\otimes C,D} \  (Id_A \otimes    \tau_{B,C,D}) \ = \    \tau_{A\otimes B,C,D}  \  \tau_{A,B,C\otimes D} \]
	A single-object category (i.e. a monoid) $\mathcal M$ may be semi-monoidal, without its unique object being required to be a unit object. In this setting, the required natural transformation will have a unique component (its {\bf associator}) $\tau\in \mathcal M$, and MacLane's pentagon condition becomes $\tau^2=(\tau\otimes Id)\tau(Id\otimes \tau)$.
\end{definition}

We refer to \cite{K,JK} for the theory of semi-monoidal categories, and \cite{TAC,JHRS} for the monoid-theoretic setting.  The following is well-established : 
\begin{theorem}\label{noSS-thm} Let $(M,\star)$ be a semi-monoidal monoid for which the functors (homomorphisms) $(Id_M\star \_),(\_ \star Id_M):M\rightarrow M$ are faithful. Then either \begin{enumerate}
		\item the associator for $\_\star\_$ is the identity $Id\in M$, in which case the unique object of $M$ is (a retract of) the unit object for $\_\star\_$, or 
		\item the set of all canonical associativity isomorphisms is a group isomorphic to Thompson's $\mathcal F$.
	\end{enumerate}
\end{theorem}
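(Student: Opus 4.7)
I would follow the strategy of the preceding concrete theorem in the abstract setting. Setting $X_0 = \tau$ and $X_{j+1} = Id \star X_j$, the naturality identity $\tau(Id \star (Id \star f)) = (Id \star f)\tau$ (an instance of the categorical naturality for $\tau$ with $f_1 = g = Id$, $h = f$), together with an induction on $i$ using that $(\_\star\_)$ is a homomorphism, yields the Thompson defining relations $X_j = X_i X_{j+1} X_i^{-1}$ for all $i < j$. Hence the subgroup $G \leq M^*$ of canonical associativity isomorphisms generated by $\{X_j\}$ is a homomorphic image of $\mathcal F$; I would invoke the well-known dichotomy (resting on simplicity of $[\mathcal F, \mathcal F]$) that such a quotient is either isomorphic to $\mathcal F$ itself --- giving conclusion (2) --- or else factors through the abelianisation $\mathcal F^{ab} \cong \mathbb Z^2$.

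The abelian case I would settle by a chained use of the two faithfulness hypotheses, which is the key technical step. In any abelian quotient of $\mathcal F$ all $x_j$ with $j \geq 1$ coincide, so $X_1 = X_2$ holds in $G$; unpacking the definitions, this reads $Id \star \tau = Id \star (Id \star \tau)$, and faithfulness of $(Id \star \_)$ upgrades it to $\tau = Id \star \tau$ in $M$ itself. Substituting into MacLane's pentagon $\tau^2 = (\tau \star Id)\,\tau\,(Id \star \tau)$ collapses the right-hand side to $(\tau \star Id)\tau^2$; cancelling the invertible $\tau^2$ yields $Id = \tau \star Id$, and faithfulness of $(\_ \star Id)$ then forces $\tau = Id$.

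With $\tau = Id$, the operation $\star$ is strictly associative, and the interchange law $(ac)\star(bd) = (a\star b)(c\star d)$ combined with $Id \star Id = Id$ yields an Eckmann--Hilton-style compatibility between $\star$ and composition. The standard semi-monoidal-to-monoidal theory of \cite{K,JK,TAC,JHRS} then freely adjoins a unit object of which the unique object of $M$ is a retract, giving conclusion (1). I expect this final step to be the main obstacle --- not for computational reasons, but because ``retract of the unit object'' needs careful formulation in the single-object semi-monoidal setting, and one must verify that the freely adjoined unit genuinely splits off the existing object rather than merely sitting alongside it.
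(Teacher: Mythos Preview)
Your argument is sound and in fact supplies more detail than the paper does: the paper's own proof of this theorem consists entirely of citations (\cite{JHRS} for Part~1, and \cite{OCL,MVL06,JHRS,FL,MB96,PD96} for Part~2), with no self-contained reasoning. What you have written is the abstraction to an arbitrary semi-monoidal monoid of the paper's \emph{concrete} theorem (that $\langle \alpha,\, Id\star\alpha\rangle \cong \mathcal F$), together with a clean treatment of the degenerate abelian branch via the two faithfulness hypotheses and the pentagon --- a step the paper nowhere spells out. So your approach is correct and, while consonant with the paper's concrete argument, goes genuinely beyond what the paper provides for this theorem.

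One point worth tightening: you tacitly identify the subgroup $G=\langle X_j\rangle$ with ``the set of all canonical associativity isomorphisms'', but the latter also contains elements such as $\tau\star Id$, $(Id\star\tau)\star Id$, and arbitrary $\star$-nestings thereof. Closure under $Id\star(\cdot)$ is immediate since $Id\star X_k=X_{k+1}$; closure under $(\cdot)\star Id$ follows by induction from the pentagon (giving $\tau\star Id = X_0^2 X_1^{-1} X_0^{-1}$) and the naturality instance $X_j\star Id = \tau\bigl(Id\star(X_{j-1}\star Id)\bigr)\tau^{-1}$. Since $a\star b=(a\star Id)(Id\star b)$, these two closures suffice. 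This is routine but deserves a sentence. Your caution about Part~1 is well placed and mirrors the paper's own deferral of that statement to \cite{JHRS}.
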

\begin{proof} Part 1. was given in \cite{JHRS}. Part 2. was observed independently by many researchers. A historical account together with a proof (with no claim to originality) is given in \cite{OCL}. Key contributions include, but are not restricted to, \cite{MVL06,JHRS, FL,MB96,PD96}; not all of these are phrased categorically.
\end{proof}

\begin{theorem} Girard's conjunction $\_ \star \_$ is a tensor on $\SN$, whose unique associativity isomorphism is the associator $\alpha\in\SN$  of Definition \ref{assoc-def}.
\end{theorem}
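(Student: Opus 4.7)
The plan is to verify the three ingredients of a semi-monoidal structure on $\SN$, viewed as a one-object category (a monoid under composition), with tensor $\star$ and candidate associator $\alpha$. Bifunctoriality of $\star$ reduces, in the one-object setting, to $\star$ being a group homomorphism $\SN \times \SN \to \SN$; this is the established fact quoted immediately after the definition of Girard's conjunction, so no further work is required here.

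For the associator itself, the key observation is that since $\SN$ has a single object, any natural transformation between the two triple-tensor functors $\SN^3 \to \SN$ has exactly one component, and hence \emph{is} a single element of $\SN$. The uniqueness clause in the theorem is thus automatic from the one-object structure; only identifying which element plays the role of the associator remains. The naturality condition on a candidate $\tau \in \SN$ collapses to the identity $\tau\,(f \star (g \star h)) = ((f \star g) \star h)\,\tau$ for all $f,g,h \in \SN$, which for $\tau = \alpha$ is exactly the statement of Lemma~\ref{nat-lemma}.

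The main step, and the only one involving genuine calculation, is MacLane's pentagon, which in a semi-monoidal monoid collapses to the single equation $\alpha^{2} = (\alpha \star Id)\,\alpha\,(Id \star \alpha)$. I would verify this by direct case analysis: expanding $(Id \star \alpha)$ and $(\alpha \star Id)$ via the definition of $\star$ and the formula for $\alpha$ gives piecewise congruential functions whose cases split according to residues modulo $8$. Post-composing with a further copy of $\alpha$ refines the partition to residues modulo $16$, on each of which both sides of the pentagon reduce to the same affine function of $n$. The hard part here is not conceptual but combinatorial: tracking all the residue classes without arithmetic error. A useful sanity check is that the preceding theorem already identifies the group generated by $\{\alpha,\,Id \star \alpha\}$ with the non-abelian Thompson group $\mathcal F$, which immediately rules out any degenerate collapse of the pentagon such as $\alpha = Id$.

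As a closing remark, one may observe that the homomorphisms $(Id \star \_)$ and $(\_ \star Id)$ are faithful (Girard's conjunction being injective in each argument), so Theorem~\ref{noSS-thm} applies to the semi-monoidal monoid $(\SN,\star,\alpha)$; since $\alpha \neq Id$, this independently reconfirms that the group of canonical associativity isomorphisms for this structure is $\mathcal F$, consistent with the preceding theorem.
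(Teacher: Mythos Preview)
Your proposal is correct. The paper's own proof is not a proof at all but a citation: it simply records that the result is well-established in \cite{PHD,AHS}, with the explicit modular-arithmetic verification of the pentagon appearing in \cite{RC}. Your direct verification --- bifunctoriality from the homomorphism property of $\star$, naturality from Lemma~\ref{nat-lemma}, and the pentagon $\alpha^{2}=(\alpha\star Id)\,\alpha\,(Id\star\alpha)$ by residue-class case analysis --- is exactly the content those references supply, so there is no substantive difference in approach. Your reading of ``unique'' (a single component, forced by the one-object setting) is also the intended one.
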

\begin{proof} This is well-established (see \cite{PHD,AHS}) with explicit algebraic / modular arithmetic formul\ae\ given in \cite{RC}. Note that none of these references observed the decomposition into the bijection from the Original Collatz Conjecture.\end{proof}
As a corollary, we may give the relationship between Girard's conjunction and the associator as a pentagonal commuting diagram (MacLane's pentagon) -- the subdiagram of Figure \ref{pentagram-fig} shown in {\color{red} red}; the required identities for this sub-diagram are also derived algebraically in \cite{RC}.

\begin{remark} We observe that Lemma \ref{noCoincidence-lem} may be derived categorically from Theorem \ref{noSS-thm}; the identity $\lambda=\rho$ would imply that Girard's conjunction was a strictly associative tensor on $\SN$, and thus that $\SN$ was a commutative monoid -- a contradiction!\end{remark}

Instead, we may use the decomposition of this associator in terms of Collatz's bijection, $\alpha=\lambda\rho^{-1}$, to factor each edge of MacLane's pentagon; relying on the functoriality of the tensor, this gives the additional edges shown in {\color{olive} green}. By construction, the sub-diagram with {\color{red} red} and {\color{olive} green} edges commutes. 

We give a categorical account of this decomposition:
\begin{definition} \label{star3-def} We define the 
	homomorphism $(\_ \star \_ \star \_):\SN^{\times 3}\rightarrow \SN$ by 
	\[ (f\star g\star h)(n) \ = \ \left\{\begin{array}{lcr} 
	\vspace{0.3em}
	3f\left( \frac{n}{3} \right) & \ \ & n \ (mod \ 3) = 0 \\
	\vspace{0.3em}
	3g\left( \frac{n-1}{3} \right)+1 & \ \ & n \ (mod \ 3) = 1 \\
	3h\left( \frac{n-2}{3} \right)+2 & \ \ & n \ (mod \ 3) = 2 \\
	\end{array}\right.
	\]
\end{definition}

\begin{proposition}
	There exist natural isomorphisms
	\begin{enumerate}
		\item $(\_ \star \_ \star \_ ) \Rightarrow ( (\_ \star \_) \star\_ )$
		\item $(\_ \star \_ \star \_ ) \Rightarrow ( \_ \star (\_ \star\_ ))$
	\end{enumerate}
	whose unique components are 
	\begin{enumerate}
		\item The reduced Collatz bijection $\lambda\in \SN$
		\item The original Collatz bijection $\rho\in \SN$ 
	\end{enumerate}
	respectively.
\end{proposition}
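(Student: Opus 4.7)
The statement has two parts with a common structure, and I would treat them in parallel. Both the source $\SN^{\times 3}$ and the codomain $\SN$ are one-object categories (monoids), so any natural isomorphism between two such functors has exactly one component. Producing each natural isomorphism therefore amounts to exhibiting an element of $\SN$ and verifying a single naturality square, and uniqueness of the component is automatic.

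The plan is first to verify part (2) by direct modular-arithmetic case analysis, and then to deduce part (1) from it using the naturality of the associator (Lemma \ref{nat-lemma}) together with $\alpha = \lambda \rho^{-1}$ (Definition \ref{assoc-def}). For part (2), the ternary operation $(f \star g \star h)$ partitions $\N$ into the three residue classes $\{3m, 3m+1, 3m+2\}$, while the bracketing $(f \star (g \star h))$ partitions $\N$ first by parity and then the odd part by parity of $(n-1)/2$, giving the three classes ``even'', ``$\equiv 1 \pmod 4$'', and ``$\equiv 3 \pmod 4$''. Inspection of $\rho$ shows it carries the first trichotomy bijectively onto the second: $\rho(3m)=2m$, $\rho(3m+1)=4m+1$, and $\rho(3m+2)=4m+3$. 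A direct substitution in each case gives the common value of both sides of $\rho \, (f \star g \star h) = (f \star (g \star h)) \, \rho$, namely $2f(m)$, $4g(m)+1$, and $4h(m)+3$ respectively.

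For part (1), once part (2) is in hand, Lemma \ref{nat-lemma} and $\alpha = \lambda \rho^{-1}$ give
\[ \lambda \, (f \star g \star h) \ = \ \alpha \rho \, (f \star g \star h) \ = \ \alpha \, (f \star (g \star h)) \, \rho \ = \ ((f\star g)\star h) \, \alpha \rho \ = \ ((f\star g)\star h) \, \lambda, \]
as required. (Alternatively, part (1) admits a parallel direct verification: $\lambda$ sends $3m, 3m+1, 3m+2$ to $4m, 4m+2, 2m+1$, matching the trichotomy ``$\equiv 0 \pmod 4$, $\equiv 2 \pmod 4$, odd'' on which $((f\star g)\star h)$ separates.)

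No step is a genuine conceptual obstacle; the only risk is bookkeeping in the modular case analysis. The true content of the proposition is perhaps best read backwards: Girard's binary tensor admits two different extensions to the ternary operation of Definition \ref{star3-def} via the two bracketings, and the Collatz bijections $\lambda$ and $\rho$ are precisely the canonical coherence isomorphisms intertwining each bracketing with the symmetric ternary $(\_\star\_\star\_)$. The associator $\alpha = \lambda \rho^{-1}$ of MacLane's pentagon is then recovered as the composite coherence isomorphism between the two bracketings, consistent with Lemma \ref{nat-lemma}.
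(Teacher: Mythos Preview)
Your proof is correct and, for part~(2), identical in substance to the paper's: both verify $\rho\,(f\star g\star h)=(f\star(g\star h))\,\rho$ by the same three-case modular calculation, obtaining the common values $2f(m)$, $4g(m)+1$, $4h(m)+3$.

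For part~(1) you take a slightly different (and arguably tidier) route: rather than repeating the case analysis, you deduce the naturality of $\lambda$ from that of $\rho$ via $\lambda=\alpha\rho$ and Lemma~\ref{nat-lemma}. The paper instead verifies part~(1) by the same style of direct computation as part~(2), exhibiting the common values $4f(m)$, $4g(m)+2$, $2h(m)+1$. Your derivation has the virtue of making explicit that the two naturality statements are equivalent modulo the already-proved naturality of $\alpha$, which is exactly the conceptual point the paper is building towards in the subsequent corollary; the paper's direct verification, on the other hand, keeps the proposition logically independent of Lemma~\ref{nat-lemma}. Since you also sketch the direct verification as an alternative, there is no gap.
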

\begin{proof} This follows by direct calculation; we observe that 
	\[ \rho(f\star g\star h)(n) = (f\star(g\star h))\rho^{-1}(n) = \left\{\begin{array}{lcr} 
	\vspace{0.3em}
	2f\left( \frac{n}{3} \right) & \ \ & n \ (mod \ 3) = 0 \\
	\vspace{0.3em}
	4g\left( \frac{n-1}{3} \right)+1 & \ \ & n \ (mod \ 3) = 1 \\
	4h\left( \frac{n-2}{3} \right)+3 & \ \ & n \ (mod \ 3) = 2 \\
	\end{array}\right.
	\]
	and similarly
	\[ \lambda(f\star g\star h)(n) = ((f\star g)\star h)\lambda^{-1}(n) = \left\{\begin{array}{lcr} 
	\vspace{0.3em}
	4f\left( \frac{n}{3} \right) & \ \ & n \ (mod \ 3) = 0 \\
	\vspace{0.3em}
	4g\left( \frac{n-1}{3} \right)+2 & \ \ & n \ (mod \ 3) = 1 \\
	2h\left( \frac{n-2}{3} \right)+1 & \ \ & n \ (mod \ 3) = 2 \\
	\end{array}\right.
	\]
\end{proof}

\begin{definition}
	We denote by $({\bf Grp},\times )$ the (strictly associative) monoidal category of groups \& homomorphisms with Cartesian product, and denote the endomorphism operad of $\SN$ by $Endo(\SN)$.
\end{definition}

\begin{lemma}\label{freeOperad-lem}
	Both $(\_\star \_):\SN^{\times 2}\rightarrow \SN$ and $(\_\star \_\star\_):\SN^{\times 3}\rightarrow \SN$ are operations in $Endo(\SN)$, and the sub-operad they generate is freely generated.
\end{lemma}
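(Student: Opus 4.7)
The first assertion is essentially routine: $\_\star\_$ is already noted in the excerpt to be a group homomorphism, and a direct calculation from the defining formula shows $\_\star\_\star\_$ is too. Thus both define elements of $Endo(\SN)$, of arity $2$ and $3$ respectively.

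For freeness, my plan is to show that the canonical map from the free operad on one binary and one ternary symbol into the sub-operad they generate is injective. An element of the free operad is a planar rooted tree $T$ with $k$ leaves whose internal nodes have arity $2$ or $3$. I would first establish, by induction on the height of $T$, a uniform description of the image operation $T_*\in Endo(\SN)(k)$: there exists a partition $\mathbb{N}=\bigsqcup_{i=1}^{k} A^T_i$ into infinite arithmetic progressions $A^T_i=\{M^T_i\,m+r^T_i:m\in\mathbb{N}\}$, where $M^T_i=2^{a_i}3^{b_i}$ records the numbers of binary and ternary internal nodes on the root-to-leaf path to leaf $i$, and $r^T_i<M^T_i$ records the branch choices along that path; on each block one has
\[
T_*(f_1,\ldots,f_k)\bigl(M^T_i\,m+r^T_i\bigr) \;=\; M^T_i\,f_i(m)+r^T_i .
\]
The inductive step reduces to the explicit formulas for $\_\star\_$ and $\_\star\_\star\_$.

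Having this explicit form, I would then recover $T$ from $T_*$ in two stages. First, I would evaluate $T_*$ on inputs of the form $(\mathrm{Id},\ldots,\mathrm{Id},\tau,\mathrm{Id},\ldots,\mathrm{Id})$ with $\tau=(0\ 1)\in\SN$ the transposition of the first two natural numbers: the resulting permutation has non-fixed set $\{r^T_i,\,M^T_i+r^T_i\}$, directly revealing $(M^T_i,r^T_i)$. Varying $i$ yields the full indexed partition $\{(M^T_i,r^T_i)\}_{i=1}^k$. Second, I would recover $T$ from the indexed partition by induction on $k$: the root is binary (resp.\ ternary) iff the leaves split, in order, into two (resp.\ three) non-empty blocks on which $r_i\bmod 2$ (resp.\ $r_i\bmod 3$) is constant; and after dividing moduli and residues within each block by the appropriate factor, each block is a valid indexed partition for a strict subtree, to which the inductive hypothesis applies.

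The main obstacle will be ruling out the possibility that the same indexed partition is simultaneously consistent with both a binary and a ternary root decomposition -- otherwise the recursive recovery of $T$ is ill-defined. I expect this to follow from a short divisibility argument on the initial leaves: both decompositions would force the first few leaves to satisfy $r_i\equiv 0\pmod{6}$, but disjointness of the progressions $A^T_i$ forces the pairs $(M^T_i,r^T_i)$ to be pairwise distinct, which quickly produces a contradiction. Once this uniqueness of recovery is settled, injectivity of the canonical map follows, and the sub-operad generated by $\{\_\star\_,\_\star\_\star\_\}$ is free on these two generators.
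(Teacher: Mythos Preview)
Your plan is sound and in fact considerably more explicit than the paper's own argument, which is a two-line sketch: verify distinctness by hand for all composite operations of arity at most $5$, then invoke ``injectivity and the natural induction argument'' for the general case. You instead give a closed-form description of $T_*$ and reconstruct $T$ from it; the formula $T_*(f_1,\ldots,f_k)(M^T_i m + r^T_i)=M^T_i f_i(m)+r^T_i$ is correct and the probe with a single transposition cleanly recovers each $(M^T_i,r^T_i)$. What your approach buys is a concrete inverse to $T\mapsto T_*$ rather than an existence-by-induction argument; what the paper's approach buys is brevity, at the cost of leaving the reader to supply the induction.

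The one place your sketch is too loose is the binary/ternary root ambiguity: ``the first few $r_i\equiv 0\pmod 6$'' together with ``the pairs $(M^T_i,r^T_i)$ are pairwise distinct'' does not by itself force a contradiction (one can have several distinct pairs with residues divisible by~$6$). A clean fix uses instead that the \emph{leftmost} leaf of each root-block has residue exactly equal to the branch index. Suppose the ordered partition admitted both a binary split $[1,j]\sqcup[j{+}1,k]$ and a ternary split $[1,j_1]\sqcup[j_1{+}1,j_2]\sqcup[j_2{+}1,k]$. Then $r_{j+1}=1$, so $1\equiv 1\pmod 3$ forces $j_1<j{+}1\le j_2$; next $r_{j_1+1}=1$ is odd, so $j_1{+}1>j$, whence $j_1=j$; finally $r_{j_2+1}=2$ is even, forcing $j_2{+}1\le j$, which contradicts $j_2\ge j_1{+}1=j{+}1$. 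With this in place your recursive recovery of $T$ is well-defined and injectivity follows.
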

\begin{proof} For operations up to arity 5, this may be verified by direct calculation. Injectivity and the natural induction argument then gives the general case.
\end{proof}

\begin{corol}
	The component of the natural isomorphism from $(\_ \star (\_ \star \_ ))$ to $((\_ \star \_ )\star \_ )$ is given by the composition of the components of :
	\begin{itemize}
		\item the natural isomorphism from $(\_ \star (\_ \star \_ ))$ to $(\_ \star \_ \star \_ )$
		\item the natural isomorphism from $(\_ \star \_ \star \_ )$ to $((\_ \star \_) \star \_ )$
	\end{itemize}
	and hence $\alpha = \lambda\rho^{-1}$.
\end{corol}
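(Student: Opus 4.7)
The plan is to observe that this corollary is essentially a formal consequence of composing natural isomorphisms, with the identification $\alpha = \lambda\rho^{-1}$ falling out directly once uniqueness of associator components is established. Since $\SN$ is a one-object category, a natural transformation between two functors $\SN^{\times 3} \to \SN$ is determined by a single component in $\SN$, and vertical composition of natural transformations corresponds to composition in $\SN$.

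Concretely, I would first take the natural isomorphism $\eta: (\_\star\_\star\_) \Rightarrow ((\_\star\_)\star\_)$ with component $\lambda$ and the natural isomorphism $\zeta: (\_\star\_\star\_) \Rightarrow (\_\star(\_\star\_))$ with component $\rho$ provided by the preceding proposition. Inverting $\zeta$ (valid since all components are bijections in $\SN$) gives $\zeta^{-1}: (\_\star(\_\star\_)) \Rightarrow (\_\star\_\star\_)$ with component $\rho^{-1}$. Composing vertically yields a natural isomorphism $\eta \circ \zeta^{-1}: (\_\star(\_\star\_)) \Rightarrow ((\_\star\_)\star\_)$ whose component is the composition $\lambda \circ \rho^{-1} = \lambda\rho^{-1}$, which is precisely $\alpha$ by Definition \ref{assoc-def}. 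This establishes the displayed equation.

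The only real subtlety is verifying that the natural isomorphism so constructed is indeed ``the'' associator natural isomorphism from Theorem 10 (the one whose component is $\alpha$). Two complementary justifications are available: first, one may simply observe that this is true by construction, because $\alpha$ was defined as $\lambda\rho^{-1}$ in Definition \ref{assoc-def} and independently shown to be the associator for Girard's conjunction. Second, and more structurally, Lemma \ref{freeOperad-lem} guarantees that the sub-operad generated by $(\_\star\_)$ and $(\_\star\_\star\_)$ is freely generated, so that natural transformations between distinct parenthesisations of a common arity are uniquely determined by the underlying operadic structure; the constructed composition must therefore coincide with the canonical associator.

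I expect this to be the main (mild) obstacle: articulating why there is no ambiguity about which natural isomorphism $(\_\star(\_\star\_)) \Rightarrow ((\_\star\_)\star\_)$ we are factoring. Everything else is routine, since the proposition supplies the components $\lambda$ and $\rho$ explicitly and the categorical machinery (composition of natural isomorphisms, inversion in a one-object category) is standard.
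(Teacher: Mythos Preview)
Your proposal is correct and matches the paper's approach; indeed, the paper states this corollary without any explicit proof, treating it as immediate from the preceding Proposition together with the definition $\alpha=\lambda\rho^{-1}$. Your write-up simply spells out the standard composition-of-natural-transformations argument that the paper leaves implicit, and your invocation of Lemma~\ref{freeOperad-lem} for uniqueness is consistent with how the paper later justifies commutativity of the full pentagram diagram.
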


We may also add in edges between the `inner triangles' of Figure \ref{pentagram-fig}, simply by taking composites along the  {\color{olive} green} paths, to give the (commuting) pentagon shown in {\color{blue} blue}.  Either by construction, by direct calculation, or as a Corollary of Lemma \ref{freeOperad-lem}, the entire diagram, of {\color{red} red}, {\color{olive} green}\ and {\color{blue} blue} edges then commutes.

\begin{figure}[h]\caption{A Commuting Pentagram over $\SN$}\label{pentagram-fig}
	\begin{center}	
		\scalebox{0.85}{
			\begin{tikzpicture}[>=latex]
			\tikzset{every node/.style={scale=.8},font=\large}
			\def\radius{5.3cm} 
			\def\offset{18} 
			\def\gold{2.6} 
			\node (V0) at (0-\offset:\radius) {$\N$};
			\node (E0) at (36-\offset:\radius/\gold) {$\N$};
			\node (V1) at (72-\offset:\radius) {$\N$};
			\node (E1) at (107-\offset:\radius/\gold) {$\N$};
			\node (V2) at (144-\offset:\radius) {$\N$};
			\node (E2) at (180-\offset:\radius/\gold) {$\N$};
			\node (V3) at (-144-\offset:\radius) {$\N$};
			\node (E3) at (-107-\offset:\radius/\gold) {$\N$};
			\node (V4) at (-72-\offset:\radius) {$\N$};
			\node (E4) at (-36-\offset:\radius/\gold) {$\N$};

			\path[->,font=\small]
			(V0) edge [color=red] node[auto,swap] {$\alpha$} (V1) 
			(V1) edge [color=red] node[auto,swap] {$\alpha$} (V2) 
			(V0) edge [color=red] node[auto] {$Id\star\alpha$} (V4) 
			(V4) edge [color=red] node[auto] {$\alpha$} (V3) 
			(V3) edge [color=red] node[auto] {$\alpha \star Id$} (V2) 
			
			(V0) edge [color=olive] node[auto,swap] {$\rho^{-1}$} (E0)
			
			(E0) edge [color=olive] node[auto,swap] {$ {\lambda}$} (V1)
			
			(V1) edge [color=olive] node[auto,swap] {$\rho^{-1}$} (E1)
			(E1) edge [color=olive] node[auto,swap] {$ {\lambda}$} (V2)
			
			(V0) edge [color=olive] node[auto] {$Id \star\rho^{-1}$} (E4)
			(E4) edge [color=olive] node[sloped, anchor=center, below] {$Id\star  {\lambda}$} (V4)
			
			(V4) edge [color=olive] node[auto] {$\rho^{-1}$} (E3)
			(E3) edge [color=olive] node[auto] {$ {\lambda}$} (V3)
			
			(V3) edge [color=olive] node[sloped, anchor=center, above] {$\rho^{-1}\star Id$} (E2)
			(E2) edge [color=olive] node[sloped, anchor=center, above] {$ {\lambda}\star Id$} (V2);
			
			\path[->,font=\small]
			(E0) edge [color=blue] node[sloped, anchor=center, above] {$\rho^{-1}\lambda$} (E1)
			(E1) edge [color=blue] node[sloped, anchor=center, above] {$(\lambda^{-1}\star Id)\lambda$} (E2)
			(E3) edge [color=blue] node[sloped, anchor=center, above] {$(\rho^{-1}\star Id)\lambda$} (E2)
			(E4) edge [color=blue] node[sloped, anchor=center, below] {$\rho^{-1}(Id\star \lambda)$} (E3)
			(E4) edge [color=blue] node[sloped, anchor=center, above] {$\rho^{-1}(Id\star \rho)$} (E0);
			\end{tikzpicture}
		}
	\end{center}
	\ \\
\end{figure}

\subsection{Interpreting the pentagram}
The interpretation of MacLane's pentagon (the {\color{red} red} paths) as the 1-skeleton of Stasheff's fourth associahedron is well-known; nodes correspond to vertices (i.e. binary well-bracketings of four symbols), and edges correspond to mappings between them. In Figure \ref{pentagram-fig}, the labels on blue paths (i.e. the inner pentagon) are the unique components of natural transformations between the following injective homomorphisms from $\SN^{\times 4}$ to $\SN$ : 
\[\{ \ \  (\_ \star (\_ \star \_ \star \_ )) \ , \ 
((\_ \star \_ \star \_ ) \star \_ ) \ , \ 
((\_ \star \_ ) \star \_ \star \_ ) \ , \ 
(\_ \star (\_ \star \_ ) \star \_ ) \ , \ 
(\_ \star \_ \star (\_  \star \_ )) \ \ \}
\]
We may of course interpret these homomorphisms as edges of $\mathcal K_4$, and the blue paths as mappings between edges of the fourth associahedron.

Finally, we need to give an account of the {\color{olive} green} paths. Again interpreting via the fourth associahedron $\mathcal K_4$, we should understand these as, {\em mappings between edges and vertices}; they are the components of natural transformations such as 
\[ (\_ \star (\_ \star (\_ \star \_ ))) \ \Rightarrow (\_ \star (\_ \star \_ \star \_ )) \ \ \ \ \mbox{ with unique component } \ \   Id \star \rho^{-1} \]
Equivalently, they correspond to deleting / inserting a matching pair of brackets in a string of four symbols. This then provides us with the interpretation of Collatz's bijection : it inserts a matching (right-associated) pair of brackets into such a string; the reduced Collatz bijection performs the same task for a left-associated pair. Their inverses delete such matching pairs of brackets.

\subsection{The OCC, and natural transformations between bracketings}The Original Collatz Conjecture concerns fixed-points of powers of $\rho$ --- implicitly, it is based on the homomorphism from $(\mathbb N , +)$ to $\SN$ given by $n\mapsto \rho^n$.  We consider $\SN$ as simply a distinguished subgroup of the symmetric inverse monoid $\IN$ and make the following definition :
\begin{definition}
	We define the {\bf left-} and {\bf right- Collatz homomorphisms} to be the monoid homomorphisms from $(\N,+)$ to $\IN$ given by
	\[ \Cz_L(n)=\lambda^n \ \ \mbox{ and } \ \ \Cz_R(n)= \rho^n \]
\end{definition}

\begin{proposition}There exists a natural transformation $\Cz_L\Rightarrow \Cz_R$  whose unique component is the successor function $succ\in \IN$.
\end{proposition}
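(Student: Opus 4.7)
The plan is to view both $\Cz_L$ and $\Cz_R$ as functors between single-object categories: $(\N,+)$ with its unique object $\star$, and $\IN$ regarded likewise as a one-object category whose morphisms are the partial injections. A natural transformation between two such functors consists of a single component $s \in \IN$ (the naturality condition on all other morphisms being vacuous since there is only one object), subject to the naturality squares
\[ \rho^n \circ s \ = \ s \circ \lambda^n \ \ \ \mbox{for all } n\in\N. \]
The claim to verify is that $s = succ$, the successor partial injection, works.

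The core of the argument is the $n=1$ case, which is essentially a restatement of Definition \ref{rcb-def}. Since $\lambda(m) = \rho(m+1) - 1$, adding $1$ to both sides yields $succ(\lambda(m)) = \rho(succ(m))$ for every $m\in\N$, i.e.\ $succ \circ \lambda = \rho \circ succ$ as an equation in $\IN$. I would briefly check that both sides have the same domain ($\N$) and the same image ($\N \setminus \{0\}$, using $\rho(0)=0$), so the equality genuinely holds in the inverse monoid, not merely pointwise on an incompletely specified domain.

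From here, induction on $n$ propagates the single-step commutation to all powers, using that $\Cz_L$ and $\Cz_R$ are monoid homomorphisms: given $\rho^n \circ succ = succ \circ \lambda^n$, one computes
\[ \rho^{n+1} \circ succ \ = \ \rho \circ (\rho^n \circ succ) \ = \ \rho \circ succ \circ \lambda^n \ = \ succ \circ \lambda \circ \lambda^n \ = \ succ \circ \lambda^{n+1}, \]
with the $n=0$ case trivial since both functors preserve identities. Uniqueness of the component is automatic from the one-object source.

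There is no real obstacle; the only subtlety worth flagging is the reason the proposition is stated in $\IN$ rather than $\SN$. The successor function is injective but not surjective on $\N$, so it is not an element of $\SN$; however, it is a well-defined partial injection, and this is precisely why the definition of $\lambda$ by conjugation of $\rho$ by $succ$ (and its partial inverse) makes sense in $\IN$. In effect, the proposition simply reinterprets Definition \ref{rcb-def} as the statement that $\lambda$ and $\rho$ are conjugate via $succ$ in $\IN$, packaged categorically as a natural transformation.
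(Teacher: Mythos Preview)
Your proof is correct and follows essentially the same approach as the paper: rewrite the defining identity $\lambda(n)=\rho(n+1)-1$ as $succ\circ\lambda=\rho\circ succ$, then extend to all powers (the paper appeals directly to the already-noted $\lambda^K(n)=\rho^K(n+1)-1$ from Definition~\ref{rcb-def}, while you spell out the induction). Your added remarks on domains in $\IN$ and on why the statement must live in $\IN$ rather than $\SN$ are sound elaborations not present in the paper's terse version.
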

\begin{proof}
	Let us re-write the key identity of Definition \ref{rcb-def} as $1+\left(\lambda\right)(n)=\rho(n+1)$, for all $n\in \N$.
	Then $1+\left(\lambda^K\right)(n)=\rho^K(n+1)$ for all $k\in \N$,  
	giving, for all $k\in \N$,
	\[ succ .\Cz_R(k )  \ = \ \Cz_L(k ). succ \ \ \ \forall n\in (\mathbb N,+) \]
	as required.
\end{proof}
Thus there exists a natural transformation between the left- and right- Collatz homomorphisms, whose unique component is simply the successor function.

\subsection{A general setting, and future directions}
It will not have escaped the reader's attention that the identity isomorphism, Girard's conjunction, and Definition \ref{star3-def} generalise to a countably infinite $\N^+$-indexed family of group homomorphisms.

\begin{definition}\label{stark-def}
	For all $k>0$, we define $\mu_{(k)}:\SN^{\times k}\rightarrow \SN$ to be the injective group homomorphism given by, for all $F=(f_0,f_1,\ldots , f_{k-1})\in \SN^{\times k}$,
	\[ \mu_{(k)}\left(F\right) (n) \ = \ k.f_{n \ (mod \ k)} \left( \frac{n - (n \ (mod \ k))}{k}\right)+ (n \ (mod \ k)) \]
	giving the identity as $Id_N=\mu_{(1)} : \SN\rightarrow \SN$, Girard's conjunction as $(\_ \star \_) =\mu_{(2)}:\SN\times \SN\rightarrow \SN$, and $(\_ \star \_ \star \_) = \mu_{(3)}:\SN^{\times 3}\rightarrow \SN$, \ldots
	
	Informally, $\mu_{(k)}\left(F\right)$ replicates the action of each member of $\{ f_0 , f_1,\ldots ,f_{k-1} \}$ on the corresponding  member of the exact covering system 
	$\{ k\mathbb N + j \}_{j=0 .. k-1}$.
\end{definition}

Our claim -- to be justified in \cite{PH22,PH22c} -- is that this indexed family of operations generates a sub-operad of $Endo(\SN)$ isomorphic to the (free, formal) operad $RPT$ of `rooted planar trees', and this observation allows us to give commuting diagrams of congruential functions between arbitrary facets of associahedra of all dimensions. 

In this setting, the original and reduced Collatz bijections $\rho,\lambda\in \SN$ are special, in that they label the {\em third} associahedron -- which simply consists of two vertices and an edge --  as follows : 
\[ 
\begin{tikzcd}
( ( \bullet  \bullet ) \bullet) &  & (  \bullet  \bullet   \bullet ) \ar{ll}[swap]{\lambda} \ar{rr}{\rho} &  & ( \bullet(  \bullet  \bullet  )  ) 
\end{tikzcd} \]
As we are also able to establish label-preserving embeddings $\mathcal K_a\hookrightarrow \mathcal K_{a+b}$, for all $a,b\in \mathbb N$, they may therefore be found in commuting diagrams over $\SN$ derived from arbitrary-dimensional associahedra.    

A fuller description of this is found in \cite{PH22,PH22c}, based on the underlying algebra described in \cite{PH22b}.





\section*{Acknowledgements}
Although no man is an island, I prefer to add acknowledgments -- of which there will be many -- to final published versions of papers.

\bibliography{OCCBib}

\appendix

\end{document}